\numberwithin{equation}{section}
\newtheorem{theo}{Theorem}[section]
\newtheorem{cor}{Corollary}[section]
\begin{document}
\title[Some sufficient conditions for the univalence of an integral operator]%
{Some sufficient conditions for the univalence of an integral operator}
\author{$^{\dagger }$Halit Orhan}
\address{$^{\dagger }$Department of Mathematics, Faculty of Science, Ataturk
University, 25240, Erzurum, Turkey.}
\email{horhan@atauni.edu.tr (Corresponding author)}
\author{$^{\dagger\dagger }$Dorina Răducanu}
\address{$^{\dagger\dagger }$Faculty of Mathematics and Computer Science,
Transilvania University of Braşov, 50091, Iuliu Maniu, 50, Braşov, Romania.}
\email{dorinaraducanu@yahoo.com}
\author{$^{\dagger\dagger\dagger }$Murat Ça\u{g}lar}
\address{$^{\dagger\dagger\dagger }$Department of Mathematics, Faculty of
Science, Ataturk University, 25240, Erzurum, Turkey.}
\email{mcaglar25@gmail.com}
\keywords{Loewner chain, univalent function, integral operator,
quasiconformal extension.\\
{\indent\textrm{2010 }}\ \textit{Mathematics Subject Classifcation:} \
Primary 30C80; Secondary 30C45, 30C62.}

\begin{abstract}
Making use of the method of subordination chains, we obtain some sufficient
conditions for the univalence of an integral operator. In particular, as
special cases, our results imply certain known univalence criteria. A
refinement to a quasiconformal extension criterion of the main result, is
also obtained.
\end{abstract}

\maketitle

\section{Introduction}

Denote by $\mathcal{U}_{r}\;(0<r\leq1)$ the disk of radius $r$ centered at $%
0 $, i.e $\mathcal{U}_{r}=\left\{z\in\mathbb{C}:|z|<r\right\}$ and let $%
\mathcal{U}=\mathcal{U}_{1}$ be the unit disk.

Let $\mathcal{A}$ denote the class of analytic functions in $\mathcal{U}$
which satisfy the usual normalization 
\begin{equation*}
f(0)=f^{\prime }(0)-1=0.
\end{equation*}

One of the most important univalence criterion for functions in the class $%
\mathcal{A}$ was obtained by Becker in 1972 \cite{Be1}. His result was
derived by means of Loewner chains and Loewner differential equation. During
the time many extensions of Becker's criterion have been given, among them
being the results due to Ahlfors \cite{Ah}, Lewandowski \cite{Le}, Pascu 
\cite{Pa1}, \cite{Pa2}, Ruscheweyh \cite{Ru}, Ovesea \cite{Ov} and Kanas and
Srivastava \cite{KaSri}.

In the present paper we use the method of subordination chains to obtain
some sufficient conditions for the univalence of an integral operator. Our
results generalize certain criteria obtained by Pascu \cite{Pa2}, Danikas
and Ruscheweyh \cite{DaRu}, Moldoveanu \cite{Mo}, Deniz and Orhan \cite{DeOr}%
, Răducanu et. all \cite{RaOrDe}. Also, we obtain a refinement to a
quasiconformal extension criterion of the main result.

\section{Loewner chains and quasiconformal extensions}

Before proving our main theorem we need a brief summary of Loewner chains
and Becker's method of constructing quasiconformal extensions by means of
Loewner chains and generalized Loewner differential equation.

A function $L(z,t):\mathcal{U}\times[0,\infty)\rightarrow\mathbb{C}$ is said
to be a \textit{subordination chain} or a \textit{Loewner chain} if:

\begin{itemize}
\item[(i)] $L(z,t)$ is analytic and univalent in $\mathcal{U}$ for all $%
t\geq0$.

\item[(ii)] $L(z,t)\prec L(z,s)$ for all $0\leq t\leq s<\infty$, where the
symbol $"\prec"$ stands for subordination.
\end{itemize}

The following result, due to Pommerenke, is often used to prove univalence
criteria.

\begin{theo}
\textrm{(\cite{Po1}, \cite{Po2})} Let $L(z,t)=a_{1}(t)z+\ldots$ be an
analytic function in $\mathcal{U}_{r}\;(0<r\leq1)$ for all $t\geq0$. Suppose
that:

\begin{itemize}
\item[(i)] $L(z,t)$ is a locally absolutely continuous function of $%
t\in[0,\infty)$, locally uniform with respect to $z\in\mathcal{U}_{r}$.

\item[(ii)] $a_{1}(t)$ is a complex valued continuous function on $[0,\infty
)$ such that $a_{1}(t)\neq 0$, $\lim_{t\rightarrow \infty }|a_{1}(t)|=\infty 
$ and 
\begin{equation*}
\left\{ \frac{L(z,t)}{a_{1}(t)}\right\} _{t\geq 0}
\end{equation*}%
is a normal family of functions in $\mathcal{U}_{r}$.

\item[(iii)] There exists an analytic function $p:\mathbb{U}%
\times[0,\infty)\rightarrow\mathbb{C}$ satisfying $\Re p(z,t)>0$ for all $%
(z,t)\in\mathcal{U}\times[0,\infty)$ and

\begin{equation}  \label{2.1}
z\frac{\partial L(z,t)}{\partial z}=p(z,t)\frac{\partial L(z,t)}{\partial t}%
\;,z\in\mathcal{U}_{r}\;,\text{a.e}\;\;t\geq0.
\end{equation}
\end{itemize}

Then, for each $t\geq0$, the function $L(z,t)$ has an analytic and univalent
extension to the whole disk $\mathbb{U}$, i.e $L(z,t)$ is a subordination
chain.
\end{theo}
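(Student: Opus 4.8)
The plan is to apply the method of characteristics to the generalized Loewner equation \eqref{2.1} in order to construct the transition (subordination) functions of the chain, and then to use hypothesis (ii) to upgrade the resulting subordination on $\mathcal{U}_r$ to an analytic univalent extension on all of $\mathbb{U}$.

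First I would fix $s\geq 0$ and, for each $z\in\mathcal{U}_r$, consider the initial value problem $\dot{\zeta}(\tau)=-\zeta(\tau)/p(\zeta(\tau),\tau)$ with $\zeta(t)=z$, for $0\le t\le s$. Since $\Re p>0$ the right-hand side never vanishes and is analytic in $\zeta$ and, by (i), measurable and locally bounded in $\tau$, so a Carath\'eodory-type existence theorem yields a unique absolutely continuous solution. Differentiating $L(\zeta(\tau),\tau)$ in $\tau$ and substituting \eqref{2.1} shows that $L(\zeta(\tau),\tau)$ is constant; hence, setting $v_{t,s}(z):=\zeta(s)$, one obtains the key identity $L(z,t)=L\!\left(v_{t,s}(z),s\right)$ on $\mathcal{U}_r$.

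The next step is to control $v_{t,s}$. Computing $\frac{d}{d\tau}|\zeta|^2=-2|\zeta|^2\,\Re\!\big(1/p(\zeta,\tau)\big)$ and using $\Re(1/p)=\Re p/|p|^2>0$ shows that $|\zeta(\tau)|$ is strictly decreasing; therefore $|v_{t,s}(z)|\le|z|$, so $v_{t,s}$ is a self-map of $\mathcal{U}_r$ fixing the origin, with $v_{t,s}'(0)=a_1(t)/a_1(s)$ (from differentiating the key identity at $z=0$). Analytic dependence of the solution on the initial data makes $v_{t,s}$ analytic and injective in $z$, so for $0\le t\le s$ the relation $L(\cdot,t)=L(\cdot,s)\circ v_{t,s}$ is precisely the subordination $L(\cdot,t)\prec L(\cdot,s)$, and in particular each $L(\cdot,t)$ is univalent on $\mathcal{U}_r$.

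Finally --- and this is where I expect the real difficulty --- I would use (ii) to extend $L(\cdot,t)$ from $\mathcal{U}_r$ to $\mathbb{U}$. Because $L(\cdot,s)$ is univalent on $\mathcal{U}_r$ with $L'(0,s)=a_1(s)$ and $|a_1(s)|\to\infty$, the Koebe one-quarter theorem forces the nested images $L(\mathcal{U}_r,s)$ to grow without bound and to exhaust $\mathbb{C}$ as $s\to\infty$; combining this with the normality of $\{L(\cdot,s)/a_1(s)\}$ lets me extract a subsequence $s_n\to\infty$ along which $a_1(s_n)^{-1}L(\cdot,s_n)$ converges locally uniformly to a univalent limit $g$ (univalent by Hurwitz, since $g'(0)=1$). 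Reconstructing $L(\cdot,t)$ from $g$ together with the transition functions $v_{t,s_n}$ then produces the sought analytic univalent extension to the whole of $\mathbb{U}$, after which the identity $L(\cdot,t)=L(\cdot,s)\circ v_{t,s}$ holds on $\mathbb{U}$ and certifies that $L(z,t)$ is a subordination chain. The delicate points are the global solvability of the characteristic flow on $\mathcal{U}_r$, which the contraction estimate secures, and the passage to the limit in the extension step, where the quantitative Koebe bounds and the normal-family hypothesis must be combined with care.
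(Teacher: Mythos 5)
The paper itself contains no proof of this statement: it is Pommerenke's theorem, quoted from \cite{Po1} and \cite{Po2}, so your attempt has to be judged against the standard proof. Your opening steps do follow that proof correctly: along solutions of $\dot{\zeta}(\tau)=-\zeta(\tau)/p(\zeta(\tau),\tau)$ the function $L(\zeta(\tau),\tau)$ is constant, the modulus computation shows $|\zeta(\tau)|$ decreases, uniqueness of solutions makes the flow injective, and one obtains the transition identity $L(\cdot,t)=L(\cdot,s)\circ v_{t,s}$ on $\mathcal{U}_{r}$. (One technical point you gloss over: $1/p$ need not be locally \emph{bounded} in $\tau$; what one actually has is local integrability, obtained from the coefficient identity $1/p(0,t)=\dot{a}_{1}(t)/a_{1}(t)$ together with a Harnack-type estimate, and this is what the Carath\'eodory theory requires.)

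The serious trouble begins with the sentence ``in particular each $L(\cdot,t)$ is univalent on $\mathcal{U}_{r}$.'' This is a non sequitur: a factorization $L(\cdot,t)=L(\cdot,s)\circ v_{t,s}$ with $v_{t,s}$ injective yields injectivity of $L(\cdot,t)$ only if $L(\cdot,s)$ is already known to be injective, and no $L(\cdot,s)$ is known to be univalent at that stage --- univalence is precisely what the theorem must produce. Consequently your later appeals to the Koebe quarter theorem and to Hurwitz (both of which presuppose univalence of the functions $L(\cdot,s_{n})/a_{1}(s_{n})$ on $\mathcal{U}_{r}$) rest on nothing. The deeper, fatal gap is in the extension step: every object you construct --- the transition maps $v_{t,s}$ and the limit $g$ --- is defined only on $\mathcal{U}_{r}$, and no composition or ``reconstruction'' of functions living on $\mathcal{U}_{r}$ can ever produce an analytic function on all of $\mathcal{U}$. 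The decisive feature of hypothesis (iii), which your proposal never uses, is that $p$ is analytic with $\Re p>0$ on the \emph{whole} disk $\mathcal{U}$, even though the differential equation is only assumed on $\mathcal{U}_{r}$. The standard proof runs the characteristic flow $v(\tau;z,s)$ starting from every $z\in\mathcal{U}$: the flow is injective, $v'(\tau;0,s)=a_{1}(s)/a_{1}(\tau)$, so the rescaled maps $a_{1}(\tau)v(\tau;\cdot,s)$ are univalent on $\mathcal{U}$ with fixed derivative $a_{1}(s)$ at the origin and hence form a normal family by the Koebe distortion theorem; hypothesis (ii), together with $v(\tau;z,s)\rightarrow 0$ (which itself needs $|a_{1}(\tau)|\rightarrow\infty$ via Harnack), identifies $\lim_{\tau\rightarrow\infty}a_{1}(\tau)v(\tau;z,s)=L(z,s)$ for $z\in\mathcal{U}_{r}$; Vitali's theorem then upgrades this to locally uniform convergence on all of $\mathcal{U}$, and the limit is the required extension, univalent by Hurwitz applied to the injective rescaled flow maps, not to $L$ itself. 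Without flowing the points of $\mathcal{U}\setminus\mathcal{U}_{r}$, your argument has no mechanism that could create the extension, so the proposal as written cannot be completed.
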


Let $k$ be constant in $[0,1)$. Recall that a homeomorphism $f$ of $G\subset 
\mathbb{C}$ is said to be $k-$\textit{quasiconformal} if $\partial _{z}f$
and $\partial _{\overline{z}}f$ ,in the distributional sense, are locally
integrable on $G$ and fulfill $\left\vert \partial _{\overline{z}%
}f\right\vert \leq k\left\vert \partial _{z}f\right\vert $ almost everywhere
in $G.$

An important problem in the theory of univalent functions is to find
functions that have quasiconformal extensions to $\mathbb{C}$.

The method of constructing quasiconformal extension criteria is based on the
following result due to Becker (see \cite{Be1}, \cite{Be2} and also \cite%
{Be3}).

\begin{theo}
Suppose that $L(z,t)$ is a subordination chain. Consider 
\begin{equation*}
w(z,t)=\frac{p(z,t)-1}{p(z,t)+1}\;,\;z\in \mathcal{U}\;,\;t\geq 0
\end{equation*}%
where $p(z,t)$ is defined by (\ref{2.1}). If 
\begin{equation*}
|w(z,t)|\leq k,\;0\leq k<1
\end{equation*}%
for all $z\in \mathcal{U}$ and $t\geq 0$, then $L(z,t)$ admits a continuous
extension to $\bar{\mathcal{U}}$ for each $t\geq 0$ and the function $F(z,%
\bar{z})$ defined by

\begin{equation*}
F(z,\bar{z})=\left\{ 
\begin{array}{ll}
L(z,0) & \mbox{,\;if \; $|z|<1$} \\ 
L\left( \frac{z}{|z|},\log |z|\right) & \mbox{,\;if \;
$|z|\geq1$.}%
\end{array}%
\right.
\end{equation*}%
is a k-quasiconformal extension of $L(z,0)$ to $\mathbb{C}$.
\end{theo}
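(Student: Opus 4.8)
The plan is to recast the hypothesis on $w$ as a uniform estimate on $p$, then construct the map $F$ and verify directly that it is a $k$-quasiconformal homeomorphism of $\mathbb{C}$. Since $w=(p-1)/(p+1)$, the bound $|w(z,t)|\le k$ is equivalent to $p(z,t)$ lying in the fixed disk $\left|p-\frac{1+k^{2}}{1-k^{2}}\right|\le\frac{2k}{1-k^{2}}$, whence $\Re p(z,t)\ge\frac{1-k}{1+k}>0$ uniformly in $(z,t)$. Because $L$ is already a subordination chain, each section $L(\cdot,t)$ is analytic and univalent in $\mathcal{U}$; I would then show in turn that $F$ is continuous and proper on $\mathbb{C}$, that it is injective, and that off the unit circle its complex dilatation is bounded by $k$.

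First I would establish the continuous extension. The uniform positivity of $\Re p$ furnishes, through the standard growth and distortion estimates associated with the generalized Loewner equation (\ref{2.1}), equicontinuity of $L(\cdot,t)$ up to $\partial\mathcal{U}$, so that $L(\cdot,t)$ extends continuously to $\overline{\mathcal{U}}$ for every $t\ge0$. On the circle $|z|=1$ the two branches of the definition of $F$ agree, since there $z/|z|=z$ and $\log|z|=0$, so $F$ is continuous across $|z|=1$; and as $|z|\to\infty$ one has $t=\log|z|\to\infty$, while $|a_{1}(t)|\to\infty$ forces $|L(\cdot,t)|\to\infty$, so $F$ is proper.

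The heart of the argument is the dilatation computation in $|z|>1$. Setting $\zeta=z/|z|$ and $t=\log|z|$, and writing $z=re^{i\theta}$ so that $F(z,\bar z)=L(e^{i\theta},\log r)$, the chain rule gives $\partial_{z}F=\frac{1}{2z}\left(\zeta L_{\zeta}+L_{t}\right)$ and $\partial_{\bar z}F=\frac{1}{2\bar z}\left(L_{t}-\zeta L_{\zeta}\right)$, where $L_{\zeta}$ and $L_{t}$ denote the partials of $L$ in its two arguments. Substituting the Loewner equation (\ref{2.1}) in the form $\zeta L_{\zeta}=p(\zeta,t)L_{t}$ yields
\begin{equation*}
\mu_{F}:=\frac{\partial_{\bar z}F}{\partial_{z}F}=\frac{z}{\bar z}\cdot\frac{1-p(\zeta,t)}{1+p(\zeta,t)}=-\frac{z}{\bar z}\,w(\zeta,t),
\end{equation*}
so that $|\mu_{F}|=|w(\zeta,t)|\le k$ almost everywhere in $|z|>1$; in $|z|<1$ the map coincides with the holomorphic function $L(\cdot,0)$, so $\mu_{F}=0$ there.

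Finally I would prove that $F$ is a homeomorphism and collect the conclusion. Injectivity at a fixed level is the univalence of $L(\cdot,t)$, while for two points of different moduli the subordination relation $L(\cdot,t_{1})\prec L(\cdot,t_{2})$ built into the chain separates the images; together with continuity and properness this makes $F$ a homeomorphism of $\mathbb{C}$ onto itself. The dilatation bound $|\mu_{F}|\le k<1$ then certifies that $F$ is $k$-quasiconformal and hence is a $k$-quasiconformal extension of $L(z,0)$. I expect the main obstacle to be the boundary regularity underlying the chain-rule step: the formal computation presupposes that the derivatives of $L$ exist on $|z|=1$ and that the Beltrami inequality holds in the distributional sense across the gluing circle. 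Making this rigorous requires the regularity theory of quasiconformal mappings---the absolute continuity on lines of $F$ together with the measurable Riemann mapping theorem---rather than the formal differentiation alone, and it is precisely the uniform bound $|w|\le k<1$ (not merely $\Re p>0$) that supplies the control needed here.
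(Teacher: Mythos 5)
First, a point of comparison: the paper does not prove this statement at all. It is Becker's quasiconformal-extension theorem, quoted as Theorem 2.2 with references to \cite{Be1}, \cite{Be2}, \cite{Be3}, so the only meaningful benchmark is Becker's original argument. Your proposal does capture the correct formal mechanism: the M\"obius identification of $|w|\le k$ with $\bigl|p-\frac{1+k^{2}}{1-k^{2}}\bigr|\le\frac{2k}{1-k^{2}}$ (hence $\Re p\ge\frac{1-k}{1+k}$), and the polar chain rule giving $\partial_{z}F=\frac{1}{2z}(\zeta L_{\zeta}+L_{t})$, $\partial_{\bar z}F=\frac{1}{2\bar z}(L_{t}-\zeta L_{\zeta})$ and so $\mu_{F}=-\frac{z}{\bar z}\,w(\zeta,t)$. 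But the three claims that carry the actual weight of the theorem are asserted rather than proved, and two of them are asserted on grounds that fail.

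Concretely: (1) the continuous extension of $L(\cdot,t)$ to $\bar{\mathcal{U}}$ does not follow from the uniform positivity of $\Re p$ "through standard growth and distortion estimates" --- interior distortion estimates for univalent functions degenerate as $|z|\to1$, and general Loewner chains (all of which have $\Re p>0$) need not extend continuously to the boundary at all. The extension is a consequence of the full strength of $|w|\le k<1$, and in Becker's proof it is not established first and then differentiated; instead one works with dilated approximants of the type $L\bigl(e^{-\varepsilon}z/|z|,\log|z|+\varepsilon\bigr)$, which are built from functions analytic across $|z|=1$, verifies the dilatation bound for these (where your chain-rule computation is legitimate), and gets both the boundary continuity and the quasiconformality of the limit from compactness of normalized families of $k$-quasiconformal mappings. (2) Injectivity of $F$ across different moduli is not given by subordination: $L(\mathcal{U},t_{1})\subseteq L(\mathcal{U},t_{2})$ for $t_{1}<t_{2}$ does not prevent a boundary value $L(\zeta_{1},t_{1})$ from coinciding with a boundary value $L(\zeta_{2},t_{2})$, because the closure of the smaller domain may touch the boundary of the larger one; excluding this again requires the quantitative hypothesis (or the limit-of-homeomorphisms argument above), not the inclusion of images alone. (3) You correctly flag that the chain rule presupposes differentiability of $L$ on $|z|=1$ and validity of (\ref{2.1}) there, and that the ACL property must be checked; but invoking "the regularity theory of quasiconformal mappings" together with the measurable Riemann mapping theorem is not a substitute for an argument --- the measurable Riemann mapping theorem constructs homeomorphisms with prescribed dilatation, it does not certify that a given map is one. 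In short, your outline reproduces the standard heuristic behind Becker's formula, but the boundary regularity, the global homeomorphism property, and the a.e.\ Beltrami bound --- which are the actual content of the theorem --- remain unproved.
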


Examples of quasiconformal extension criteria can be found in \cite{Ah}, 
\cite{AnHi}, \cite{Bet}, \cite{Kr}, \cite{Pf} and more recently in \cite{Ho1}%
, \cite{Ho2}, \cite{Ho3}.

\section{Univalence criteria}

In this section, making use of Theorem 2.1, we obtain certain sufficient
conditions for the univalence of an integral operator.

\begin{theo}
Let $f,g,\phi \in \mathcal{A},\;g(z)\neq 0,\;\phi (z)\neq 0$ in $\mathcal{U}$%
. Let also $m\in \mathbb{R}_{+}$ and $\alpha ,\beta ,\gamma \in \mathbb{C}$.
If 
\begin{equation}
\left\vert \frac{(1-|z|^{(m+1)\gamma })}{\gamma }\left[ \alpha \frac{%
zf^{\prime \prime }(z)}{f^{\prime }(z)}+\beta \left( \frac{zg^{\prime }(z)}{%
g(z)}-\frac{z\phi ^{\prime }(z)}{\phi (z)}\right) \right] -\frac{m-1}{2}%
\right\vert \leq \frac{m+1}{2}  \label{3.1}
\end{equation}%
holds $z\in \mathcal{U}$, then the function $\mathcal{F}_{\alpha ,\beta
,\gamma }$ defined by 
\begin{equation}
\mathcal{F}_{\alpha ,\beta ,\gamma }(z)=\left[ {\gamma
\int\limits_{0}^{z}u^{\gamma -1}}\left( f^{\prime }(u)\right) ^{\alpha
}\left( \frac{g(u)}{\phi (u)}\right) ^{\beta }du\right] ^{1/\gamma }\text{ }%
z\in \mathcal{U},  \label{3.2}
\end{equation}%
where the principal branch is intended, is analytic and univalent in $%
\mathcal{U}$.
\end{theo}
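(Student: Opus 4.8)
The plan is to realize $\mathcal{F}_{\alpha,\beta,\gamma}$ as the initial element $L(z,0)$ of a Loewner chain and then invoke Theorem 2.1. Write $\psi(z)=\left(f'(z)\right)^{\alpha}\left(g(z)/\phi(z)\right)^{\beta}$, with the branches fixed so that $\psi(0)=1$; the hypotheses guarantee that $\psi$ is analytic and non-vanishing on $\mathcal{U}$, and a short computation gives $\frac{z\psi'(z)}{\psi(z)}=\alpha\frac{zf''(z)}{f'(z)}+\beta\left(\frac{zg'(z)}{g(z)}-\frac{z\phi'(z)}{\phi(z)}\right)=:P(z)$, so that \eqref{3.1} reads $\left|\frac{1-|z|^{(m+1)\gamma}}{\gamma}P(z)-\frac{m-1}{2}\right|\le\frac{m+1}{2}$. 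I would then introduce the candidate chain
\begin{equation*}
L(z,t)=\left[\gamma\int_{0}^{e^{-t}z}u^{\gamma-1}\psi(u)\,du+\left(e^{(m+1)\gamma t}-1\right)\left(e^{-t}z\right)^{\gamma}\psi\!\left(e^{-t}z\right)\right]^{1/\gamma},
\end{equation*}
again with the principal branch.

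At $t=0$ the bracketed correction vanishes and $L(z,0)=\mathcal{F}_{\alpha,\beta,\gamma}(z)$, which reduces the theorem to checking that $L$ satisfies (i), (ii) and (iii) of Theorem 2.1. Factoring $\left(e^{-t}z\right)^{\gamma}$ out of the bracket exhibits $L(z,t)=e^{-t}z\,H(z,t)^{1/\gamma}$ with $H(0,t)=e^{(m+1)\gamma t}\ne0$; hence $L(\cdot,t)$ is analytic in a disk $\mathcal{U}_{r}$ and its leading coefficient is $a_{1}(t)=e^{mt}$, which is continuous, non-zero and satisfies $|a_{1}(t)|\to\infty$ because $m>0$. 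The local absolute continuity in $t$ and the normality of $\{L(z,t)/a_{1}(t)\}$ follow from the explicit form together with the uniform boundedness of $e^{-(m+1)t}H(z,t)^{1/\gamma}$ on compact subsets of $\mathcal{U}_{r}$.

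The core of the argument is condition (iii). Since $L^{\gamma}=G$, where $G$ denotes the bracket, one has $p(z,t)=\dfrac{z\,\partial_{z}L}{\partial_{t}L}=\dfrac{z\,\partial_{z}G}{\partial_{t}G}$, so I would differentiate $G$, cancel the common factor $\left(e^{-t}z\right)^{\gamma}\psi\!\left(e^{-t}z\right)$, and simplify. Writing $E=e^{(m+1)\gamma t}$ and $R=P\!\left(e^{-t}z\right)$, this computation should yield
\begin{equation*}
p(z,t)=\frac{\gamma E+R(E-1)}{m\gamma E-R(E-1)},\qquad w(z,t)=\frac{p(z,t)-1}{p(z,t)+1}=\frac{1-m}{1+m}+\frac{2\left(1-e^{-(m+1)\gamma t}\right)}{\gamma(1+m)}\,P\!\left(e^{-t}z\right),
\end{equation*}
and in particular $w(0,t)=\frac{1-m}{1+m}$ has modulus strictly less than $1$.

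The remaining step, and the one I expect to be the main obstacle, is to pass from \eqref{3.1} to $\Re p(z,t)>0$, i.e. $|w(z,t)|<1$ on $\mathcal{U}\times[0,\infty)$. The difficulty is the mismatch between the factor $1-e^{-(m+1)\gamma t}$ appearing in $w$ and the non-analytic factor $1-|z|^{(m+1)\gamma}$ appearing in \eqref{3.1}. I would resolve it by the maximum modulus principle: for fixed $t>0$ the function $z\mapsto w(z,t)$ is analytic on $\overline{\mathcal{U}}$, and on $|z|=1$ one has $\left|e^{-t}z\right|^{(m+1)\gamma}=e^{-(m+1)\gamma t}$, so that $\frac{m+1}{2}\,|w(z,t)|$ equals precisely the left-hand side of \eqref{3.1} evaluated at $\zeta=e^{-t}z\in\mathcal{U}$, whence $|w(z,t)|\le1$ on the boundary. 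Since $w(0,t)$ has modulus below $1$, the maximum principle forces $|w(z,t)|<1$ throughout $\mathcal{U}$; the case $t=0$ is immediate because $w(\cdot,0)$ is the constant $\frac{1-m}{1+m}$. This verifies (iii), and Theorem 2.1 then guarantees that $L(z,t)$ is a Loewner chain, so that in particular $L(z,0)=\mathcal{F}_{\alpha,\beta,\gamma}$ is analytic and univalent in $\mathcal{U}$.
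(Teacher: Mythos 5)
Your proposal is correct and follows essentially the same route as the paper: your chain $L(z,t)$ is exactly the paper's chain (\ref{3.3}) with its free parameter $a$ set to $1$, and both arguments reduce condition (iii) of Theorem 2.1 to the bound $|w(z,t)|<1$, which is obtained by evaluating on $|z|=1$, where $e^{-(m+1)\gamma t}=|e^{-t}z|^{(m+1)\gamma}$ turns the hypothesis (\ref{3.1}) into a boundary estimate, and then applying the maximum modulus principle. (The paper keeps the extra parameter $a>0$ only because it is exploited later in the quasiconformal-extension refinement, Theorem 4.1; for univalence alone your specialization $a=1$ loses nothing, and your explicit handling of the constant case of $w$ via $w(0,t)=\frac{1-m}{1+m}$ is in fact slightly more careful than the paper's.)
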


\begin{proof}
Let $a$ be a positive real number. We are going to prove that there exists $%
r\in (0,1]$ such that the function $L:\mathcal{U}_{r}\times \lbrack 0,\infty
)\rightarrow \mathbb{C}$, defined by 
\begin{equation}
L(z,t)=\left\{ \gamma \int\limits_{0}^{e^{-at}z}u^{\gamma -1}(f^{\prime
}(u))^{\alpha }\left( \frac{g(u)}{\phi (u)}\right) ^{\beta }du+(e^{mat\gamma
}-e^{-at\gamma })z^{\gamma }(f^{\prime }(e^{-at}z))^{\alpha }\left( \frac{%
g(e^{-at}z)}{\phi (e^{-at}z)}\right) ^{\beta }\right\} ^{1/\gamma }
\label{3.3}
\end{equation}%
is analytic in $\mathcal{U}_{r}$ for all $t\in \lbrack 0,\infty )$ and
satisfies the conditions of Theorem 2.1. Since $f,g,\phi \in \mathcal{A}$,
there exists a disk $\mathcal{U}_{r_{1}},\;0<r_{1}\leq 1$ in which the
function 
\begin{equation*}
h(z)=(f^{\prime }(z))^{\alpha }\left( \frac{g(z)}{\phi (z)}\right) ^{\beta }
\end{equation*}%
is analytic. The powers are considered with their principal branches. The
function $h(z)$ is analytic and does not vanish in $\mathcal{U}_{r_{1}}$.

Consider the function 
\begin{equation*}
h_{1}(z,t)=\gamma \int\limits_{0}^{e^{-at}z}u^{\gamma -1}h(u)du\;,\;z\in 
\mathcal{U}_{r_{1}}\;,\;t\geq 0.
\end{equation*}%
We can write 
\begin{equation*}
h_{1}(z,t)=z^{\gamma }h_{2}(z,t)
\end{equation*}%
where $h_{2}(z,t)$ is analytic in $\mathcal{U}_{r_{1}}$ for all $t\geq 0$.
It follows that the function 
\begin{equation*}
h_{3}(z,t)=h_{2}(z,t)+(e^{mat\gamma }-e^{-at\gamma })h(e^{-at}z)
\end{equation*}%
is also analytic in $\mathcal{U}_{r_{1}}$ and 
\begin{equation*}
h_{3}(0,t)=e^{mat\gamma }.
\end{equation*}%
Since $h_{3}(0,0)=1,h_{3}(z,t)\neq 0$ for $t\geq 0$ and $\lim_{t\rightarrow
\infty }|h_{3}(z,t)|=\infty $ there exists a disk $\mathcal{U}%
_{r_{2}},\;0<r_{2}\leq r_{1}.$ Therefore we can choose a uniform and
analytic branch of $[h_{3}(z,t)]^{1/\gamma }$ in $\mathcal{U}_{r_{2}}$ which
will be denoted by $h_{4}(z,t)$. Now, the function defined by (\ref{3.3})
can be rewritten as 
\begin{equation}
L(z,t)=zh_{4}(z,t)=a_{1}(t)z+...,\;z\in \mathcal{U}_{r_{2}}\;\text{and}%
\;t\geq 0  \label{3.4}
\end{equation}%
where $a_{1}(t)=e^{mat}.$ Moreover $L(z,t)$ is analytic in $\mathcal{U}%
_{r_{2}}$ for all $t\geq 0$.

Let $r_{3}\in (0,r_{2}]$ and let $K=\left\{ z\in \mathbb{C}:|z|\leq
r_{3}\right\} $. Since the function $L(z,t)$ is analytic in $\mathcal{U}%
_{r_{2}}$, there exists $M>0$ such that 
\begin{equation*}
\left\vert \frac{L(z,t)}{a_{1}(t)}\right\vert \leq M\;\text{for}\;z\in K\;%
\text{and}\;t\geq 0.
\end{equation*}%
Thus, $\left\{ \frac{L(z,t)}{a_{1}(t)}\right\} _{t\geq 0}$ forms a normal
family in $\mathcal{U}_{r_{2}}$.

From (\ref{3.4}) we obtain that $\left\{ \frac{\partial L(z,t)}{\partial t}%
\right\} $ is analytic in $\mathcal{U}_{r_{2}}$. It follows that $\left\vert 
\frac{\partial L(z,t)}{\partial t}\right\vert $ is bounded on $[0,T]$ for
any fixed $T>0$ and $z\in \mathcal{U}_{r_{2}}$. Therefore, the function $%
L(z,t)$ is locally absolutely continuous on $[0,\infty )$, locally uniform
with respect to $\mathcal{U}_{r_{2}}$.

For $0<r\leq r_{2}$ and $t\geq 0$, consider the function $p:\mathcal{U}%
_{r}\times \lbrack 0,\infty )\rightarrow \mathbb{C}$ defined by 
\begin{equation*}
p(z,t)={z\frac{\partial L(z,t)}{\partial z}}\diagup \frac{\partial L(z,t)}{%
\partial t}.
\end{equation*}%
In order to prove that the function $p(z,t)$ is analytic and has positive
real part in $\mathcal{U}$, we will show that the function 
\begin{equation*}
w(z,t)=\frac{p(z,t)-1}{p(z,t)+1}
\end{equation*}%
is analytic in $\mathcal{U}$ and 
\begin{equation}
|w(z,t)|<1\;,\;\text{for all}\;z\in \mathcal{U}\;\text{and}\;t\geq 0.
\label{3.5}
\end{equation}%
Lengthy but elementary calculation gives 
\begin{equation}
w(z,t)=\frac{(1+a)\mathcal{G}(z,t)+1-ma}{(1-a)\mathcal{G}(z,t)+1+ma},
\label{3.6}
\end{equation}%
where 
\begin{equation}
\mathcal{G}(z,t)=\frac{1}{\gamma }\left[ \alpha \frac{e^{-at}zf^{\prime
\prime }(e^{-at}z)}{f^{\prime }(e^{-at}z)}+\beta \left( \frac{%
e^{-at}zg^{\prime }(e^{-at}z)}{g(e^{-at}z)}-\frac{e^{-at}z\phi ^{\prime
}(e^{-at}z)}{\phi (e^{-at}z)}\right) \right] \left( 1-e^{-(m+1)at\gamma
}\right)  \label{3.7}
\end{equation}%
for $z\in \mathcal{U}$ and $t\geq 0$.

Inequality (\ref{3.5}) is therefore, equivalent to 
\begin{equation}
\left\vert \mathcal{G}(z,t)-\frac{m-1}{2}\right\vert <\frac{m+1}{2}\;,\;z\in 
\mathcal{U}\;,\;t\geq 0.  \label{3.8}
\end{equation}%
For $t=0$ the last inequality holds. Define 
\begin{equation}
\mathcal{H}(z,t)=\mathcal{G}(z,t)-\frac{m-1}{2}\;,\;z\in \mathcal{U}%
\;,\;t\geq 0.  \label{3.9}
\end{equation}%
Since $|e^{-at}z|\leq |e^{-at}|=e^{-at}<1$ for all $z\in \bar{\mathcal{U}}%
=\left\{ z\in \mathbb{C}:|z|\leq 1\right\} $ and $t>0$, we have that $%
\mathcal{H}(z,t)$ is analytic in $\bar{\mathcal{U}}$ for every $t>0$. Making
use of the maximum modulus principle, we obtain that, for each arbitrary
fixed $t>0$, there exists $\theta (t)\in \mathbb{R}$ such that 
\begin{equation*}
|\mathcal{H}(z,t)|<\max_{|z|=1}|\mathcal{H}(z,t)|=|\mathcal{H}(e^{i\theta
},t)|\;\;\text{for all}\;z\in \mathcal{U}.
\end{equation*}%
Let $u=e^{-at}e^{i\theta }$. Then $|u|=e^{-at}$ and $%
e^{-(m+1)at}=(e^{-at})^{(m+1)}=|u|^{m+1}$. Therefore 
\begin{equation*}
|\mathcal{H}(e^{i\theta },t)|=\left\vert \frac{(1-|u|^{(m+1)\gamma })}{%
\gamma }\left[ \alpha \frac{uf^{\prime \prime }(u)}{f^{\prime }(u)}+\beta
\left( \frac{ug^{\prime }(u)}{g(u)}-\frac{u\phi ^{\prime }(u)}{\phi (u)}%
\right) \right] -\frac{m-1}{2}\right\vert .
\end{equation*}%
Inequality (\ref{3.1}) from hypothesis implies 
\begin{equation}
|\mathcal{H}(e^{i\theta },t)|\leq \frac{m+1}{2}.  \label{3.10}
\end{equation}%
From (\ref{3.10}) it follows that inequality (\ref{3.8}) is satisfied for
all $z\in \mathcal{U}$ and $t\geq 0$.

Since all the conditions of Theorem 2.1 are satisfied, we obtain that the
function $L(z,t)$ has an analytic and univalent extension to the whole unit
disk $\mathcal{U}$, for all $t\geq 0$. If $t=0$ we have $L(z,0)=\mathcal{F}%
_{\alpha ,\beta ,\gamma }(z)$ and therefore, our integral operator $\mathcal{%
F}_{\alpha ,\beta ,\gamma }$ is analytic and univalent in $\mathcal{U}$.
\end{proof}

Making use of Theorem 3.1, we derive another univalence criterion for the
integral operator $\mathcal{F}_{\alpha ,\beta ,\gamma }$.

\begin{theo}
Let $f,g,\phi \in \mathcal{A},\;g(z)\neq 0,\;\phi (z)\neq 0$. Let also $%
\alpha ,\beta ,\gamma \in \mathbb{C}$ with $\Re \gamma >0$ and $m\in \mathbb{%
R}_{+},\;m\geq 1$. If 
\begin{equation*}
\frac{1-|z|^{(m+1)\Re \gamma }}{\Re \gamma }\left\vert \alpha \frac{%
zf^{\prime \prime }(z)}{f^{\prime }(z)}+\beta \left( \frac{zg^{\prime }(z)}{%
g(z)}-\frac{z\phi ^{\prime }(z)}{\phi (z)}\right) \right\vert \leq 1
\end{equation*}%
holds for $z\in \mathcal{U}$ then, the function $\mathcal{F}_{\alpha ,\beta
,\gamma }$ defined by (\ref{3.2}) is analytic and univalent in $\mathcal{U}$.
\end{theo}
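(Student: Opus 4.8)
The plan is to deduce this from Theorem 3.1 by showing that the present hypothesis forces condition (\ref{3.1}) to hold. Throughout, I write
\[
P(z)=\alpha \frac{zf''(z)}{f'(z)}+\beta\left(\frac{zg'(z)}{g(z)}-\frac{z\phi'(z)}{\phi(z)}\right),
\]
so that (\ref{3.1}) becomes $\bigl|\tfrac{1-|z|^{(m+1)\gamma}}{\gamma}\,P(z)-\tfrac{m-1}{2}\bigr|\le\tfrac{m+1}{2}$, while the assumption of the theorem reads $\tfrac{1-|z|^{(m+1)\Re\gamma}}{\Re\gamma}\,|P(z)|\le 1$. The task is therefore to pass from the hypothesis, which controls the modulus $|P(z)|$ weighted by a \emph{real} factor, to (\ref{3.1}), which involves a \emph{complex} factor and a recentring by $\tfrac{m-1}{2}$.

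The crucial step, and the one I expect to be the main obstacle, is the elementary inequality
\[
\left|\frac{1-s^{\gamma}}{\gamma}\right|\le\frac{1-s^{\Re\gamma}}{\Re\gamma},\qquad s\in[0,1),\ \Re\gamma>0 .
\]
I would prove it cleanly by means of the integral representation $\tfrac{1-s^{\gamma}}{\gamma}=\int_{s}^{1}t^{\gamma-1}\,dt$ (valid since $\gamma\neq0$): moving the modulus inside the integral and using $|t^{\gamma-1}|=t^{\Re\gamma-1}$ for real $t>0$ gives
\[
\left|\int_{s}^{1}t^{\gamma-1}\,dt\right|\le\int_{s}^{1}t^{\Re\gamma-1}\,dt=\frac{1-s^{\Re\gamma}}{\Re\gamma}.
\]
Applying this with $s=|z|^{m+1}$ yields $\bigl|\tfrac{1-|z|^{(m+1)\gamma}}{\gamma}\bigr|\le\tfrac{1-|z|^{(m+1)\Re\gamma}}{\Re\gamma}$, so the hypothesis delivers
\[
\left|\frac{1-|z|^{(m+1)\gamma}}{\gamma}\right|\,|P(z)|\le\frac{1-|z|^{(m+1)\Re\gamma}}{\Re\gamma}\,|P(z)|\le 1 .
\]

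With this bound in hand I would invoke the triangle inequality together with the assumption $m\ge1$, which guarantees $\tfrac{m-1}{2}\ge0$:
\[
\left|\frac{1-|z|^{(m+1)\gamma}}{\gamma}P(z)-\frac{m-1}{2}\right|\le\left|\frac{1-|z|^{(m+1)\gamma}}{\gamma}\right||P(z)|+\frac{m-1}{2}\le 1+\frac{m-1}{2}=\frac{m+1}{2}.
\]
This is precisely inequality (\ref{3.1}), so Theorem 3.1 applies and shows that $\mathcal{F}_{\alpha,\beta,\gamma}$ is analytic and univalent in $\mathcal{U}$. It is worth flagging the exact role of the two extra hypotheses: $\Re\gamma>0$ is what makes $\int_{s}^{1}t^{\Re\gamma-1}\,dt$ converge and evaluate to $\tfrac{1-s^{\Re\gamma}}{\Re\gamma}$, while $m\ge1$ is exactly what permits pulling $\tfrac{m-1}{2}$ out of the modulus with a plus sign. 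The only genuinely delicate point is the boxed inequality; the integral representation renders it transparent, whereas a direct attack through $|1-s^{\gamma}|^{2}=1-2s^{\Re\gamma}\cos(\Im\gamma\cdot\log s)+s^{2\Re\gamma}$ leads to a far less pleasant estimate.
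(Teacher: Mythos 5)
Your proposal is correct and follows essentially the same route as the paper: both reduce the statement to Theorem 3.1 by combining the inequality $\left\vert \frac{1-|z|^{(m+1)\gamma}}{\gamma}\right\vert \leq \frac{1-|z|^{(m+1)\Re\gamma}}{\Re\gamma}$ with the triangle inequality, using $m\geq 1$ to pull out $\frac{m-1}{2}$ with a plus sign. The only difference is that the paper simply cites this key inequality from Pascu \cite{Pa2}, whereas you prove it yourself via the integral representation $\frac{1-s^{\gamma}}{\gamma}=\int_{s}^{1}t^{\gamma-1}\,dt$ — a clean, self-contained justification of the step the paper takes for granted.
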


\begin{proof}
It can be proved (see \cite{Pa2}) that for $z\in \mathcal{U}\setminus
\left\{ 0\right\} ,\;\Re \gamma >0$ and $m\in \mathbb{R}_{+}$ 
\begin{equation*}
\left\vert \frac{1-|z|^{(m+1)\gamma }}{\gamma }\right\vert \leq \frac{%
1-|z|^{(m+1)\Re \gamma }}{\Re \gamma }.
\end{equation*}%
For $m\geq 1$, we have 
\begin{equation*}
\left\vert \frac{1-|z|^{(m+1)\gamma }}{\gamma }\left[ \alpha \frac{%
zf^{\prime \prime }(z)}{f^{\prime }(z)}+\beta \left( \frac{zg^{\prime }(z)}{%
g(z)}-\frac{z\phi ^{\prime }(z)}{\phi (z)}\right) \right] -\frac{m-1}{2}%
\right\vert
\end{equation*}%
\begin{equation*}
\leq \left\vert \frac{1-|z|^{(m+1)\gamma }}{\gamma }\left[ \alpha \frac{%
zf^{\prime \prime }(z)}{f^{\prime }(z)}+\beta \left( \frac{zg^{\prime }(z)}{%
g(z)}-\frac{z\phi ^{\prime }(z)}{\phi (z)}\right) \right] \right\vert +\frac{%
m-1}{2}
\end{equation*}%
\begin{equation*}
\leq \frac{1-|z|^{(m+1)\Re \gamma }}{\Re \gamma }\left\vert \alpha \frac{%
zf^{\prime \prime }(z)}{f^{\prime }(z)}+\beta \left( \frac{zg^{\prime }(z)}{%
g(z)}-\frac{z\phi ^{\prime }(z)}{\phi (z)}\right) \right\vert +\frac{m-1}{2}
\end{equation*}%
\begin{equation*}
\leq 1+\frac{m-1}{2}=\frac{m+1}{2}.
\end{equation*}%
Since inequality (\ref{3.1}) is satisfied, making use of Theorem 3.1, we can
conclude that the function $\mathcal{F}_{\alpha ,\beta ,\gamma }$ is
analytic and univalent in $\mathcal{U}$.
\end{proof}

\begin{example}
Let $\alpha ,\beta ,\gamma $ be three complex numbers such that $\Re \gamma
>0$ and $\Re \gamma \geq |\alpha |+|\beta |$. Then, the function 
\begin{equation*}
\mathcal{F}_{\alpha ,\beta ,\gamma }(z)=z\left[ _{2}F_{1}(\gamma ,-(\alpha
+\beta );1+\gamma ;-\frac{z}{2})\right] ^{1/\gamma }
\end{equation*}%
is univalent in $\mathcal{U}$. The symbol $_{2}F_{1}(a,b;c;z)$ denotes the
well known hypergeometric function.
\end{example}

\begin{proof}
Set $f(z)=z+\displaystyle\frac{z^{2}}{4},\;g(z)=z+\frac{z^{2}}{2},\;z\in 
\mathcal{U}$ and $\phi (z)=z,\;z\in \mathcal{U}$ in Theorem 3.2. Making use
of triangle inequality, we have 
\begin{equation*}
\frac{1-|z|^{(m+1)\Re \gamma }}{\Re \gamma }\left\vert \alpha \frac{%
zf^{\prime \prime }(z)}{f^{\prime }(z)}+\beta \left( \frac{zg^{\prime }(z)}{%
g(z)}-\frac{z\phi ^{\prime }(z)}{\phi (z)}\right) \right\vert
\end{equation*}%
\begin{equation*}
=\frac{1-|z|^{(m+1)\Re \gamma }}{\Re \gamma }\left\vert \alpha \frac{z}{z+2}%
+\beta \left( \frac{2z+2}{z+2}-1\right) \right\vert
\end{equation*}%
\begin{equation*}
\leq \frac{1-|z|^{(m+1)\Re \gamma }}{\Re \gamma }\frac{|z|}{2-|z|}(|\alpha
|+|\beta |)<\frac{1}{\Re \gamma }(|\alpha |+|\beta |)\leq 1.
\end{equation*}%
The last inequality follows from $1-|z|^{(m+1)\Re \gamma }<1,\;\displaystyle%
\frac{|z|}{2-|z|}<1,\;z\in \mathcal{U}$ and $\Re \gamma \geq |\alpha
|+|\beta |$. Since all the conditions of Theorem 3.2 are satisfied, we
obtain that the function 
\begin{equation*}
\mathcal{F}_{\alpha ,\beta ,\gamma }(z)=\left[ \gamma
\int\limits_{0}^{z}u^{\gamma -1}\left( 1+\frac{u}{2}\right) ^{\alpha }\left(
1+\frac{u}{2}\right) ^{\beta }du\right] ^{1/\gamma }
\end{equation*}%
is univalent in $\mathcal{U}$. With the substitution $u=tz$ the function $%
\mathcal{F}_{\alpha ,\beta ,\gamma }(z)$ becomes 
\begin{equation*}
\mathcal{F}_{\alpha ,\beta ,\gamma }(z)=z\left[ \gamma
\int\limits_{0}^{1}t^{\gamma -1}\left( 1+t\frac{z}{2}\right) ^{\alpha +\beta
}dt\right] ^{1/\gamma }=z\left[ _{2}F_{1}(\gamma ,-(\alpha +\beta );1+\gamma
;-\frac{z}{2})\right] ^{1/\gamma }.
\end{equation*}%
With this, the proof is complete.

Certain particular cases of Theorem 3.1 and Theorem 3.2 respectively, are
listed below.
\end{proof}

If in Theorem 3.1 we consider $\alpha =\beta ,\;g(z)=z$ and $\phi =f$, we
obtain the following univalence condition.

\begin{cor}
Let $f\in \mathcal{A}$ and $m\in \mathbb{R}_{+}$. If 
\begin{equation*}
\left\vert \alpha \frac{1-|z|^{(m+1)\gamma }}{\gamma }\left[ 1+\frac{%
zf^{\prime \prime }(z)}{f^{\prime }(z)}-\frac{zf^{\prime }(z)}{f(z)}\right] -%
\frac{m-1}{2}\right\vert \leq \frac{m+1}{2}
\end{equation*}%
holds $z\in \mathcal{U}$ then the function $\mathcal{F}_{\alpha ,\gamma }(z)$
defined by 
\begin{equation}
\mathcal{F}_{\alpha ,\gamma }(z)=\left[ \gamma \int\limits_{0}^{z}u^{\gamma
-1}\left( \frac{uf^{\prime }(u)}{f(u)}\right) ^{\alpha }du\right] ^{1/\gamma
}  \label{3.11}
\end{equation}%
is analytic and univalent in $\mathcal{U}$.
\end{cor}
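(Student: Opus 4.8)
The plan is to deduce this statement directly from Theorem 3.1 by specializing the three functions. Concretely, in Theorem 3.1 I would set $\beta=\alpha$, $g(z)=z$ and $\phi=f$, and then check that, under these choices, both the integral operator and the hypothesis of Theorem 3.1 collapse onto those appearing in the corollary.

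First I would verify the operator. With $g(u)=u$ and $\phi(u)=f(u)$ the integrand factor $\left(g(u)/\phi(u)\right)^{\beta}=\left(u/f(u)\right)^{\alpha}$ combines with $\left(f^{\prime}(u)\right)^{\alpha}$ to give $\left(uf^{\prime}(u)/f(u)\right)^{\alpha}$, so that $\mathcal{F}_{\alpha,\beta,\gamma}$ in (\ref{3.2}) becomes exactly the operator $\mathcal{F}_{\alpha,\gamma}$ in (\ref{3.11}).

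Next I would simplify the hypothesis. For $g(z)=z$ one has $zg^{\prime}(z)/g(z)=1$, and for $\phi=f$ one has $z\phi^{\prime}(z)/\phi(z)=zf^{\prime}(z)/f(z)$; inserting these together with $\beta=\alpha$ into the bracket of (\ref{3.1}) and factoring out $\alpha$ produces $\alpha\bigl[1+zf^{\prime\prime}(z)/f^{\prime}(z)-zf^{\prime}(z)/f(z)\bigr]$. Thus condition (\ref{3.1}) reduces verbatim to the displayed inequality in the statement, and an application of Theorem 3.1 then yields the analyticity and univalence of $\mathcal{F}_{\alpha,\gamma}$.

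The one point requiring care --- and the only genuine obstacle --- is that the literal hypotheses of Theorem 3.1 require $g(z)\neq0$ and $\phi(z)\neq0$ on $\mathcal{U}$, whereas both $g(z)=z$ and $\phi=f$ vanish at the origin. I would resolve this by recalling what the proof of Theorem 3.1 actually uses, namely that $h(z)=\left(f^{\prime}(z)\right)^{\alpha}\left(g(z)/\phi(z)\right)^{\beta}$ is analytic and nonvanishing in a neighbourhood $\mathcal{U}_{r_{1}}$ of the origin. Here $g(z)/\phi(z)=z/f(z)$ extends analytically across $z=0$ with limiting value $1\neq0$, and $f^{\prime}(0)=1\neq0$, so $h$ remains analytic and zero-free near $0$. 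Consequently the entire construction of the Loewner chain $L(z,t)$ in the proof of Theorem 3.1 goes through unchanged, and the conclusion follows.
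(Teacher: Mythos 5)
Your proposal is correct and takes exactly the paper's route: the paper obtains this corollary by the one-line observation that setting $\alpha=\beta$, $g(z)=z$ and $\phi=f$ in Theorem 3.1 collapses both the hypothesis (\ref{3.1}) and the operator (\ref{3.2}) onto those of the corollary. Your extra paragraph on the vanishing of $g$ and $\phi$ at the origin is a point the paper silently ignores (since every function in $\mathcal{A}$ vanishes at $0$, the condition $g(z)\neq0$, $\phi(z)\neq0$ must be read on $\mathcal{U}\setminus\{0\}$), and your resolution via the analytic, nonvanishing extension of $z/f(z)$ is the right way to justify it.
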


If we take $\alpha=\gamma=1$ then, the integral operator $\mathcal{F}%
_{\alpha,\gamma}(z)$ defined by (\ref{3.11}) reduces to the integral
operator considered by Danikas and Ruscheweyh in \cite{DaRu}.

An improvement of Becker's univalence criterion (see\cite{Be1}) which was
obtained by Pascu can be derived from Theorem 3.2 for $\alpha=1,\;g=\phi$
and $m=1$.

\begin{cor}
\textrm{(\cite{Pa2})} Let $f\in \mathcal{A}$ and $\gamma \in \mathbb{C}%
,\;\Re \gamma >0$. If 
\begin{equation*}
\frac{1-|z|^{2\Re \gamma }}{\Re \gamma }\left\vert \frac{zf^{\prime \prime
}(z)}{f^{\prime }(z)}\right\vert \leq 1\;,\;z\in \mathcal{U}
\end{equation*}%
then the integral operator%
\begin{equation*}
\mathcal{F}_{\gamma }(z)=\left[ \gamma \int\limits_{0}^{z}u^{\gamma
-1}f^{\prime }(u)du\right] ^{1/\gamma }
\end{equation*}%
is analytic and univalent in $\mathcal{U}$.
\end{cor}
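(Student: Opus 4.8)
The plan is to obtain this corollary as an immediate specialization of Theorem 3.2, with no new analysis required. I would choose the parameters $\alpha =1$, $m=1$, and take $g=\phi$ (any admissible function of $\mathcal{A}$ serves here, since only the ratio $g/\phi$ enters the operator and the criterion). First I would record the effect of $m=1$: then $(m+1)\Re \gamma =2\Re \gamma$, so the weight $1-|z|^{(m+1)\Re \gamma }$ appearing in the hypothesis of Theorem 3.2 becomes exactly $1-|z|^{2\Re \gamma }$. I would also note in passing that $m=1$ meets the standing constraint $m\geq 1$ imposed in Theorem 3.2, so the theorem is genuinely applicable.

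Next I would simplify the bracketed quantity under these choices. With $g=\phi$ the logarithmic-derivative difference $\frac{zg^{\prime }(z)}{g(z)}-\frac{z\phi ^{\prime }(z)}{\phi (z)}$ vanishes identically, and with $\alpha =1$ the remaining term is simply $\frac{zf^{\prime \prime }(z)}{f^{\prime }(z)}$. Hence the sufficient condition of Theorem 3.2 collapses to $\frac{1-|z|^{2\Re \gamma }}{\Re \gamma }\left\vert \frac{zf^{\prime \prime }(z)}{f^{\prime }(z)}\right\vert \leq 1$ for $z\in \mathcal{U}$, which is precisely the hypothesis assumed in the corollary. I would then verify that the operator $\mathcal{F}_{\alpha ,\beta ,\gamma }$ of $(\ref{3.2})$ degenerates correctly: since $g=\phi$ forces $\left( g(u)/\phi (u)\right) ^{\beta }=1$ and $\alpha =1$ gives $\left( f^{\prime }(u)\right) ^{\alpha }=f^{\prime }(u)$, the integrand $u^{\gamma -1}\left( f^{\prime }(u)\right) ^{\alpha }\left( g(u)/\phi (u)\right) ^{\beta }$ reduces to $u^{\gamma -1}f^{\prime }(u)$, so that $\mathcal{F}_{1,\beta ,\gamma }(z)=\left[ \gamma \int_{0}^{z}u^{\gamma -1}f^{\prime }(u)\,du\right] ^{1/\gamma }=\mathcal{F}_{\gamma }(z)$.

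With these identifications in place, Theorem 3.2 applies verbatim and yields that $\mathcal{F}_{\gamma }$ is analytic and univalent in $\mathcal{U}$, completing the proof. There is no substantive obstacle in this argument; the only points deserving a word of care are the purely formal ones already flagged, namely that the choice $m=1$ respects the requirement $m\geq 1$ and that the admissibility condition $g(z)\neq 0,\;\phi (z)\neq 0$ is satisfied trivially by $g=\phi$, for which the ratio $g/\phi \equiv 1$ is analytic and nonvanishing throughout $\mathcal{U}$.
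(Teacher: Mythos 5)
Your proof is correct and coincides with the paper's own derivation: the paper obtains this corollary precisely by specializing Theorem 3.2 to $\alpha =1$, $g=\phi $ and $m=1$, exactly as you do. The details you spell out (the weight becoming $1-|z|^{2\Re \gamma }$, the vanishing of the logarithmic-derivative difference, and the collapse of the integrand to $u^{\gamma -1}f^{\prime }(u)$) are the same routine verifications the paper leaves implicit.
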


\section{Quasiconformal extension criterion}

In this section we will refine the univalence condition given in Theorem 3.1
to a quasiconformal extension criterion.

\begin{theo}
Let $f,g,\phi \in \mathcal{A},\;g(z)\neq 0,\;\phi (z)\neq 0$. Let also $m\in 
\mathbb{R}_{+}$, $\alpha ,\beta ,\gamma \in \mathbb{C}$ and $k\in \lbrack
0,1)$. If 
\begin{equation}
\left\vert \frac{(1-|z|^{(m+1)\gamma })}{\gamma }\left[ \alpha \frac{%
zf^{\prime \prime }(z)}{f^{\prime }(z)}+\beta \left( \frac{zg^{\prime }(z)}{%
g(z)}-\frac{z\phi ^{\prime }(z)}{\phi (z)}\right) \right] -\frac{m-1}{2}%
\right\vert \leq k\frac{m+1}{2}  \label{4.1}
\end{equation}%
is true for $z\in \mathcal{U}$ then, the function $\mathcal{F}_{\alpha
,\beta ,\gamma }$ given by (\ref{3.2}) has a quasiconformal extension to $%
\mathbb{C}$.
\end{theo}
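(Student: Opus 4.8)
The plan is to reuse the Loewner chain machinery already set up in the proof of Theorem 3.1, now combined with Becker's quasiconformal extension result (Theorem 2.2). The key observation is that Theorem 3.1 was proved by constructing the explicit subordination chain $L(z,t)$ given in (\ref{3.3}) and verifying, via Theorem 2.1, that it is a genuine Loewner chain. Since the present hypothesis (\ref{4.1}) is strictly stronger than (\ref{3.1}) — it carries the same expression but bounds it by $k\frac{m+1}{2}$ rather than $\frac{m+1}{2}$ — the entire verification that $L(z,t)$ is a subordination chain carries over verbatim. Thus I would open by recalling the chain $L(z,t)$ from (\ref{3.3}), note that all the conditions (i)--(iii) of Theorem 2.1 hold exactly as before, and conclude that $L(z,t)$ is a subordination chain with $L(z,0)=\mathcal{F}_{\alpha,\beta,\gamma}(z)$.

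The next step is to track the quantity $w(z,t)=\frac{p(z,t)-1}{p(z,t)+1}$, which is precisely the function appearing in Theorem 2.2. From the computation (\ref{3.6})--(\ref{3.7}) in the proof of Theorem 3.1, we already know the exact form of $w(z,t)$ in terms of $\mathcal{G}(z,t)$. The crucial point is to upgrade the strict bound $|w(z,t)|<1$ to a uniform bound $|w(z,t)|\le k$ with $k<1$. The chain of equivalences established in Theorem 3.1 shows that $|w(z,t)|\le k$ is equivalent to $\left|\mathcal{G}(z,t)-\frac{m-1}{2}\right|\le k\frac{m+1}{2}$, i.e. to $|\mathcal{H}(z,t)|\le k\frac{m+1}{2}$ in the notation of (\ref{3.9}). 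I would therefore rerun the maximum-modulus argument: for fixed $t>0$, $\mathcal{H}(z,t)$ is analytic on $\bar{\mathcal{U}}$, so its maximum is attained at some boundary point $e^{i\theta}$, and the substitution $u=e^{-at}e^{i\theta}$ reduces $|\mathcal{H}(e^{i\theta},t)|$ to exactly the left-hand side of hypothesis (\ref{4.1}). Hypothesis (\ref{4.1}) then gives $|\mathcal{H}(e^{i\theta},t)|\le k\frac{m+1}{2}$, hence the uniform bound $|w(z,t)|\le k$ for all $z\in\mathcal{U}$ and $t\ge 0$.

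With both ingredients in hand — $L(z,t)$ a subordination chain and $|w(z,t)|\le k<1$ — I would invoke Theorem 2.2 directly. It supplies the explicit $k$-quasiconformal extension $F(z,\bar z)$ of $L(z,0)=\mathcal{F}_{\alpha,\beta,\gamma}(z)$ to $\mathbb{C}$, which is exactly the desired conclusion. Honestly, I do not expect any genuine obstacle here: the theorem is a refinement of Theorem 3.1 in which the only substantive change is the sharpened constant $k$, and the whole point is that the Loewner-chain construction and the maximum-modulus step transfer with essentially no modification. The one subtlety worth stating carefully is that the bound must hold \emph{uniformly} in $t$ with a single constant $k<1$; this is automatic because $k$ is fixed in the hypothesis and the maximum-modulus reduction produces the same bound for every $t>0$ (and for $t=0$ directly from (\ref{4.1})).
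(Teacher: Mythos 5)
Your argument hinges on the claim that the ``chain of equivalences established in Theorem 3.1'' shows $|w(z,t)|\le k$ to be equivalent to $\left|\mathcal{G}(z,t)-\frac{m-1}{2}\right|\le k\frac{m+1}{2}$. That claim is false for a general choice of the parameter $a>0$ entering the chain (3.3), and this is the genuine gap. What the proof of Theorem 3.1 actually establishes is that the M\"obius transformation $T(\zeta)=\frac{(1+a)\zeta+1-ma}{(1-a)\zeta+1+ma}$ carries the disk $\left|\zeta-\frac{m-1}{2}\right|<\frac{m+1}{2}$ onto the full unit disk $|w|<1$; that equivalence holds for every $a$, but a M\"obius map does not carry concentric subdisks to concentric subdisks. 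For $a\neq 1$ the preimage $\left\{\zeta:|T(\zeta)|\le k\right\}$ is a disk with center $\frac{a(1+k^{2})(m-1)+(1-k^{2})(ma^{2}-1)}{2a(1+k^{2})+(1-k^{2})(1+a^{2})}\neq\frac{m-1}{2}$ and radius $\frac{2ak(1+m)}{2a(1+k^{2})+(1-k^{2})(1+a^{2})}\neq k\frac{m+1}{2}$, so hypothesis (4.1) does not directly give $|w(z,t)|\le k$. This is exactly why the paper's proof is longer than yours: it computes the pullback disk (4.3) of $\{|w|\le l\}$, and then finds the smallest $l\in[0,1)$ for which the disk (4.4) furnished by (4.1) is \emph{contained} in (4.3); the outcome is an $l$-quasiconformal extension with $l=\frac{(1-a)^{2}+k|1-a^{2}|}{|1-a^{2}|+k(1-a)^{2}}$ when $a\neq 1$, which is in general strictly larger than $k$.

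The good news is that your argument becomes correct verbatim if you fix $a=1$ in the chain (3.3) --- which is legitimate, since $a$ was an arbitrary positive real throughout the proof of Theorem 3.1. For $a=1$ the formula (3.6) degenerates to the affine map $w(z,t)=\frac{2\mathcal{G}(z,t)+1-m}{m+1}$, and then $|w(z,t)|\le k$ \emph{is} equivalent to $\left|\mathcal{G}(z,t)-\frac{m-1}{2}\right|\le k\frac{m+1}{2}$; your maximum-modulus step converts (4.1) into this bound, and Theorem 2.2 then yields a $k$-quasiconformal extension. Since the statement only asserts the existence of some quasiconformal extension, this single modification repairs the proof, and it even recovers the best constant the paper's own disk-inclusion analysis produces (namely $l=k$, attained at $a=1$). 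But as written --- with the chain taken for general $a$ and the concentric-disk equivalence asserted without justification --- the key step fails.
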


\begin{proof}
In the proof of Theorem 3.1 has been proved that the function $L(z,t)$ given
by (\ref{3.3}) is a subordination chain in $\mathcal{U}$. Applying Theorem
2.2 to the function $w(z,t)$ given by (\ref{3.6}), we obtain that the
assumption 
\begin{equation}
\left\vert \frac{(1+a)\mathcal{G}(z,t)+1-ma}{(1-a)\mathcal{G}(z,t)+1+ma}%
\right\vert <l,\;z\in \mathcal{U},\;t\geq 0\;\text{and}\;l\in \lbrack 0,1)
\label{4.2}
\end{equation}%
where $\mathcal{G}(z,t)$ is defined by (\ref{3.7}), implies $l$%
-quasiconformal extensibility of $\mathcal{F}_{\alpha ,\beta ,\gamma }$.

Lenghty but elementary calculation shows that the last inequality (\ref{4.2}%
) is equivalent to 
\begin{equation}
\left\vert \mathcal{G}(z,t)-\frac{a(1+l^{2})(m-1)+(1-l^{2})(ma^{2}-1)}{%
2a(1+l^{2})+(1-l^{2})(1+a^{2})}\right\vert \leq \frac{2al(1+m)}{%
2a(1+l^{2})+(1-l^{2})(1+a^{2})}.  \label{4.3}
\end{equation}

It is easy to check that, under the assumption (\ref{4.1}) we have 
\begin{equation}
\left\vert \mathcal{G}(z,t)-\frac{m-1}{2}\right\vert \leq k\frac{m+1}{2}.
\label{4.4}
\end{equation}

Consider the two disks $\Delta $ and $\Delta ^{\prime }$ defined by (\ref%
{4.3}) and (\ref{4.4}) respectively, where $\mathcal{G}(z,t)$ is replaced by
a complex variable $\zeta $. Our theorem will be proved if we find the
smallest $l\in \lbrack 0,1)$ for which $\Delta ^{\prime }$ is contained in $%
\Delta $. This will be so if and only if the distance apart of the centers
plus the smallest radius is equal, at most, to the largest radius. So, we
are required to prove that 
\begin{equation*}
\left\vert \frac{a(1+l^{2})(m-1)+(1-l^{2})(ma^{2}-1)}{%
2a(1+l^{2})+(1-l^{2})(1+a^{2})}-\frac{m-1}{2}\right\vert +k\frac{m+1}{2}\leq 
\frac{2al(1+m)}{2a(1+l^{2})+(1-l^{2})(1+a^{2})}
\end{equation*}%
or equivalently 
\begin{equation}
\frac{(1-l^{2})|1-a^{2}|}{2[2a(1+l^{2})+(1-l^{2})(1+a^{2})]}\leq \frac{2al}{%
2a(1+l^{2})+(1-l^{2})(1+a^{2})}-\frac{k}{2}  \label{4.5}
\end{equation}%
with the condition 
\begin{equation}
\frac{2al}{2a(1+l^{2})+(1-l^{2})(1+a^{2})}-\frac{k}{2}\geq 0.  \label{4.6}
\end{equation}

We will solve inequalities (\ref{4.5}) and (\ref{4.6}) for $1-a^{2}>0$. In a
similar way they can be solved for $1-a^{2}<0$.

The solutions of the quadratic equation obtained from (\ref{4.4}), where
instead of inequality sign we put equal, are: 
\begin{equation*}
L_{1}=\frac{(1-a)^{2}+k(1-a^{2})}{1-a^{2}+k(1-a)^{2}}\;,\;L_{2}=-\frac{%
(1+a)^{2}+k(1-a^{2})}{1-a^{2}+k(1-a)^{2}}.
\end{equation*}

Therefore, the solution of inequality (\ref{4.5}) is $l\leq L_{2}$ and $%
L_{1}\leq l$. Since $L_{2}<0$ it remains $L_{1}\leq l$.

After similar calculations, from inequality (\ref{4.6}), we get $l\leq%
\mathcal{L}_{2}$ and $\mathcal{L}_{1}\leq l$, where 
\begin{equation*}
\mathcal{L}_{1}=\frac{-2a+\sqrt{4a^{2}+(1-a^{2})^{2}k^{2}}}{k(1-a)^{2}}\;,\;%
\mathcal{L}_{2}=\frac{-2a-\sqrt{4a^{2}+(1-a^{2})^{2}k^{2}}}{k(1-a)^{2}}.
\end{equation*}

Since $\mathcal{L}_{2}<0$ it follows $\mathcal{L}_{1}\leq l$.

It can be checked, eventually by using Mathematica program, that $\mathcal{L}%
_{1}\leq L_{1}$ and thus $L_{1}\leq l<1.$ If $a=1$, both inequalities (4.5)
and (4.6) reduce to $k\leq l$.

Consequently, we proved that the assumption (4.1) implies the existence of
an $l$-quasiconformal extension of $\mathcal{F}_{\alpha ,\beta ,\gamma }$ to 
$\mathbb{C}$, which is given by 
\begin{equation*}
l=\left\{ 
\begin{array}{ll}
\frac{(1-a)^{2}+k|1-a^{2}|}{|1-a^{2}|+k(1-a)^{2}}, & a\in (0,\infty
)\setminus \left\{ 1\right\} \\ 
k, & a=1.%
\end{array}%
\right.
\end{equation*}

Therefore $L_{1}\leq l<1$ and the proof is complete.
\end{proof}

\end{document}